\def\<#1>{\langle#1\rangle}
\def\diag{\operatorname{diag}}
\def\oeis#1{\href{https://oeis.org/#1}{#1}}
\newtheorem{thm}{Theorem}
\newtheorem{lemma}{Lemma}
\newtheorem{prop}{Proposition}
\newtheorem{defn}{Definition}
\begin{document}

 \author[Robert Dougherty-Bliss]{Robert Dougherty-Bliss}
 \address{Robert Dougherty-Bliss, Rutgers University, New Brunswick NJ, USA}
 \email{robert.w.bliss@gmail.com}

 \author[Manuel Kauers]{Manuel Kauers\,$^\ast$}
 \address{Manuel Kauers, Institute for Algebra, J. Kepler University Linz, Austria}
 \email{manuel.kauers@jku.at}
 \thanks{$^\ast$ Supported by the Austrian FWF grants P31571-N32 and I6130-N}

 \title{Hardinian Arrays}

 \begin{abstract}
     In 2014, R.H.~Hardin contributed a family of sequences about king-moves on
     an arrays to the On-Line Encyclopedia of Integer Sequences (OEIS). The
     sequences were recently noticed in an automated search of the OEIS by
     Kauers and Koutschan, who conjectured a recurrence for one of them. We
     prove their conjecture as well as some older conjectures stated in the
     OEIS entries. We also have some new conjectures for the asymptotics of
     Hardin's sequences.
 \end{abstract}

 \maketitle

\section{Introduction}

The On-Line Encyclopedia of Integer sequences~\cite{sloane} contains over
350,000 sequences and perhaps tens of thousands of conjectures about them. Here
we resolve some of these conjectures related to a family of sequences due to
R.H.~Hardin.

For any positive integer $r$, let $H_r(n, k)$ be the number of $n \times k$
arrays which obey the following rules:
\begin{itemize}
\item The entry in position $(1, 1)$ is $0$, and the entry in position $(n,k)$
  is $\max(n, k) - r - 1$.
\item The entry in position $(i, j)$ must equal or be one more than each of
  the entries in positions $(i - 1, j)$, $(i, j - 1)$, and $(i - 1, j - 1)$.
\item The entry in position $(i, j)$ must be within $r$ of $\max(i, j) - 1$.
\end{itemize}
We call these arrangements of numbers Hardinian arrays. 
For $r = 1, 2, 3$, they are counted by the tables
\oeis{A253026}, \oeis{A253223}, and \oeis{A253004}, respectively.
Below is an example for $r=1$, $n=6$, and $k=5$.
\begin{equation*}
    \begin{bmatrix}
        0 & 1 & 2 & 2 & 3 \\
        1 & 1 & 2 & 2 & 3 \\
        2 & 2 & 2 & 3 & 3 \\
        3 & 3 & 3 & 3 & 4 \\
        4 & 4 & 4 & 4 & 4 \\
        4 & 4 & 4 & 4 & 4
    \end{bmatrix}
\end{equation*}
Hardin noticed several interesting patterns. For example, for every fixed $r$ and $k$,
the sequence $H_r(n,k)$ seems to be a polynomial in $n$ of degree $r$ for sufficiently
large~$n$. He also conjectured an evaluation of the diagonal for $r = 1$, namely
\[
    H_1(n, n) = \frac13(4^{n - 1} - 1).
\]
More recently, Kauers and Koutschan~\cite{kauers23d} performed an automated search for sequences
in the OEIS which satisfy linear recurrences with polynomial coefficients.
Hardin happened to submit the diagonal of $r = 2$ as its own sequence, which
led Kauers and Koutschan to conjecture a recurrence for $f(n) = H_2(n, n)$,
namely
\begin{alignat*}1
  &32 (n+1) (2 n+1)^2(1575 n^6+21285 n^5+117954 n^4+343020 n^3+551943 n^2+465785 n+161046) f(n)\\
  &-8 (121275 n^9+1933470 n^8+13267683 n^7+51280818 n^6+122556360 n^5+186866686 n^4\\
  &\qquad +180574335 n^3+105734340 n^2+33718283 n+4443102) f(n+1)\\
  &+2 (294525 n^9+4763070 n^8+33170868n^7+130145646 n^6+315713355 n^5+488415476 n^4\\
  &\qquad+478464380 n^3+283626704 n^2+91378536 n+12137328) f(n+2)\\
  &+(294525 n^9+4668570n^8+31877118 n^7+122735586 n^6+292620525 n^5+445804136 n^4\\
  &\qquad+431097970 n^3+252913504 n^2+80866406 n+10688508) f(n+3)\\
  &-(121275n^9+1961820 n^8+13655808 n^7+53503836 n^6+129484209 n^5+199650088 n^4\\
  &\qquad+194784258 n^3+114948300 n^2+36871922 n+4877748) f(n+4)\\
  &+2 (2 n+7) (1575 n^6+11835 n^5+35154 n^4+52554 n^3+41382 n^2+16118 n+2428) (n+3)^2 f(n+5)=0.
\end{alignat*}
Our main results are that many of these conjectures are correct. In
Section~\ref{sec:r1} we will prove Hardin's conjectured closed form for
$H_1(n,n)$ and extend this to a closed form for the rectangular case
$H_1(n,k)$. In Section~\ref{sec:rbig} we will prove that the conjectured
recurrence of Kauers and Koutschan for $H_2(n, n)$ is correct, and in fact that
\emph{every} $H_r(n, n)$ satisfies such a recurrence. We conjecture asymptotic
estimates for $H_r(n, n)$ for all $r \geq 2$.

\section{The case $r=1$}
\label{sec:r1}

This case can be settled by an elementary combinatorial argument. Let us first
consider the diagonal and confirm the closed form representation conjectured by
Hardin. In the following proof we index our arrays beginning from $0$ rather
than $1$.

\begin{thm}\label{thm:diag:r=1}
  $H_1(n,n)=\frac13(4^{n-1}-1)$ for all $n\geq1$.
\end{thm}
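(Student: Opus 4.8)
The plan is to pass to the sequence of level sets of an array and count the resulting objects directly. Index positions from $0$, so an array $A=(a_{i,j})_{0\le i,j\le n-1}$ counted by $H_1(n,n)$ has $a_{0,0}=0$ and $a_{n-1,n-1}=n-2$; the step conditions together with $a_{0,0}=0$ force $\max(i,j)-1\le a_{i,j}\le\max(i,j)$, and monotonicity forces $a_{i,j}\le n-2$. For $1\le v\le n-1$ set $R_v=\{(i,j):a_{i,j}<v\}$, and let $Q_v=\{(i,j):0\le i,j\le v-1\}$ denote the top-left $v\times v$ square. First I would show that $A\mapsto(R_1,\dots,R_{n-1})$ is a bijection onto the set of chains of order ideals of the grid that satisfy $R_v\subseteq Q_{v+1}$, $R_v+(1,1)\subseteq R_{v+1}$, and $R_{n-1}=Q_n$, with inverse $a_{i,j}=\min\{v:(i,j)\in R_v\}-1$. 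Each defining property of a Hardinian array translates into one of these conditions and conversely; the containment $Q_v\subseteq R_v$ need not be imposed, since it follows by induction from $(0,0)\in R_1$, the shift condition, and the fact that an order ideal containing $(v,v)$ contains $Q_{v+1}$.

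Next I would analyze such chains. An order ideal $R$ with $Q_v\subseteq R\subseteq Q_{v+1}$ is either \emph{full}, meaning $R=Q_{v+1}$, or \emph{small}, in which case it is encoded by the pair $(p,q)\in\{0,\dots,v\}^2$ counting its cells in column $v$ and in row $v$. The shift condition makes fullness absorbing: if $R_v=Q_{v+1}$ then $R_{v+1}=Q_{v+2}$. Since $R_{n-1}=Q_n$ is full, every chain is small on an initial segment $v=1,\dots,v_0-1$ and full afterwards, and conversely any small chain of that length extends uniquely to such a chain (by $R_v=Q_{v+1}$ for $v\ge v_0$), the transition from small to full being unconstrained. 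Unwinding $R_v+(1,1)\subseteq R_{v+1}$ in the small range yields the two independent constraints "$p_v\ge1\Rightarrow p_{v+1}\ge p_v+1$" and "$q_v\ge1\Rightarrow q_{v+1}\ge q_v+1$" on the encoding pairs. Hence the number of small chains of length $m$ equals $M(m)^2$, where $M(m)$ counts sequences $(x_1,\dots,x_m)$ with $0\le x_v\le v$ and $x_v\ge1\Rightarrow x_{v+1}\ge x_v+1$, and
\[
  H_1(n,n)=\sum_{v_0=1}^{n-1}M(v_0-1)^2 .
\]

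Finally I would show $M(m)=2^m$. In such a sequence the indices with $x_v=0$ form an initial block $\{1,\dots,t\}$, because once a term is positive every later term is strictly larger, hence positive; on $\{t+1,\dots,m\}$ the sequence is then strictly increasing with $x_v\le v$. Writing $x_{t+j}=j+y_j$ turns that tail into a weakly increasing sequence $(y_1,\dots,y_{m-t})$ with entries in $\{0,\dots,t\}$, of which there are $\binom{m}{t}$; summing over $t$ gives $M(m)=\sum_{t=0}^{m}\binom{m}{t}=2^m$. Substituting back yields $H_1(n,n)=\sum_{v_0=1}^{n-1}4^{v_0-1}=\tfrac13(4^{n-1}-1)$ for $n\ge2$, and $H_1(1,1)=0$ agrees. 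I expect the first step to be the main obstacle: one must check in both directions that the Hardinian constraints are exactly the chain constraints, and the delicate points are propagating the band inequalities and, from the chain side, handling the boundary row and column when verifying the step conditions.
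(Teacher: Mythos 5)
Your argument is correct, and it reaches the result by a route that is organized genuinely differently from the paper's, even though the underlying combinatorics is closely related. You decompose an array into its chain of level sets $R_1\subseteq\cdots\subseteq R_{n-1}$; the boundaries between consecutive level sets are exactly the nonintersecting lattice paths that the paper extracts in Proposition~\ref{prop:gesselviennot} and then counts via the Gessel--Viennot determinants $\Delta(n-1)_i^j$. Instead of determinants, you count the chains directly by recording, at each level $v$, only the two parameters $(p_v,q_v)$ measuring how far $R_v$ reaches into the new column and row of $Q_{v+1}$; the resulting factorization into $M(m)^2$ with $M(m)=2^m$ plays the same role as the path counts $C(k)=2^{k-1}$ in the paper's first, double-path proof, and the sequences $(p_v)$ and $(q_v)$ essentially encode that proof's upper and lower paths. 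What your version buys is cleaner bookkeeping: the ``undetermined diagonal entries'' that the paper's first proof must discuss ad hoc are absorbed into the single transition index $v_0$ from small to full, and the identity $M(m)=2^m$ gets a tidy bijective proof via weakly increasing sequences; what you give up is the determinantal reformulation that the paper needs anyway for $r\ge2$. Two points to repair in a write-up. First, the condition $(0,0)\in R_1$ must be listed among the defining properties of the chains: you invoke it to derive $Q_v\subseteq R_v$, but as literally stated your target set also contains chains with $R_1=\emptyset$ (e.g.\ $R_1=\cdots=R_{n-2}=\emptyset$, $R_{n-1}=Q_n$), which correspond to arrays with $a_{0,0}>0$; since your subsequent count only ranges over ideals with $Q_v\subseteq R_v$, the final answer is unaffected, but the bijection as stated is not onto the set you describe. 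Second, the verification that the single shift condition $R_v+(1,1)\subseteq R_{v+1}$ recovers the horizontal and vertical step bounds uses the down-set property of $R_{v+1}$ via the cell $(i,j+1)$ or $(i+1,j)$, which fails to exist in the last column and row; there one must note that those entries are forced to equal $n-2$ by the band and endpoint conditions, so the step conditions hold trivially. You correctly flag this as the delicate point, and it does go through.
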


\begin{proof}
    Consider a valid $n \times n$ array. Above the upper diagonal, draw a
    dividing path between row entries which are equal to their king-distance
    and less than their king-distance. Draw the same path below the diagonal,
    but make it with respect to columns. See Figure~\ref{fig:double-path} for
    an example.

    By the monotonicty rule, the upper path can only move down and to the
    right. Further, if the first entry to its right in row $i$ is $(i, j)$,
    then the first entry to its right in row $i + 1$ is either $(i + 1, j)$ or
    $(i + 1, j + 1)$. Thus the upper-path essentially consists of two kinds of
    steps: down and right-down. The situation is mirrored in the lower path.

    If the upper path does not divide row $i$ just after the row's entry on the
    main diagonal, then the row is determined from the diagonal to the right
    endpoint. Entries between the diagonal and the path equal their
    king-distance, entries after the path equal one less than their
    king-distance, and the diagonal must equal $i$ as its king-distance is $i$
    and to its right is an $i + 1$. The analogous statement is true for the
    lower path with respect to columns. Thus every entry is determined except
    for when both paths divide the $i$th row and column just after the
    diagonal. In fact, the \emph{first} time this happens, the diagonal entry
    is still determined, as one of the entries above or to the left of the
    diagonal entry equals $i$.

    In summary, the only entries not determined by these paths are the diagonal
    entries which both paths are adjacent to, except the first one and last one
    (by rule). If one path first touches the diagonal at position $i$, and the
    other at position $j > i$, then there are $n - j - 2$ diagonal entries not
    determined. Of these entries, we may choose at most one to be the first
    less than its king-distance. After this choice all later entries must do
    the same. Thus each such pair of paths generates $n - j - 1$ valid arrays.

    If $C(k)$ is the number of paths which are first adjacent to the diagonal
    at position $k$, then
    \begin{equation*}
        H_1(n, n) = 2 \sum_{j = 0}^{n - 1} \sum_{i = 0}^{j - 1} C(i) C(j) (n - j - 1) + \sum_{j = 0}^{n - 1} C(j)^2 (n - j - 1).
    \end{equation*}
    Because each path essentially has two steps to choose from, both of them
    moving one step closer to their end, we have $C(k) = 2^{k - 1}$ if $k > 0$
    and $C(0) = 1$. Evaluating the above summations and simplifying produces
    $H_1(n, n) = (4^{n - 1} - 1) / 3$.
\end{proof}

The double-path idea used in the proof above extends to the case of
rectangular Hardinian arrays. The closed form expression for $H_1(n,k)$ shown
next confirms conjectures stated by Hardin for $H_1(n,1)$, $H_1(n,2)$, \dots, $H_1(n,7)$.

\begin{thm}
    \label{thm:rect:r=1}
  $H_1(n, k) = 4^{k - 1} (n - k) + \frac13(4^{k - 1} - 1)$ for all $n\geq k\geq1$.
\end{thm}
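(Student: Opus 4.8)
The plan is to induct on $n\ge k$. The base case $n=k$ is Theorem~\ref{thm:diag:r=1}, and for the inductive step it suffices to show that for $n>k$ exactly $4^{k-1}$ new arrays appear, i.e.\ $H_1(n,k)=H_1(n-1,k)+4^{k-1}$; telescoping this identity from $n=k$ gives the closed form. The same count can be organised as in Theorem~\ref{thm:diag:r=1}: draw the two dividing paths in the rectangle and evaluate the resulting summation over pairs of paths---this is the sense in which the ``double-path idea extends''---and both formulations reduce to the same calculation.

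First I would record the structural facts from the proof of Theorem~\ref{thm:diag:r=1}: every entry equals its king-distance or one less; entries are weakly increasing along rows, columns, and main diagonals; and, as an easy consequence, the last row of an $n\times k$ array with $n\ge k$ is forced to be constant and equal to one below its king-distance. Deleting that last row produces a valid $(n-1)\times k$ array precisely when the entry directly above the corner, at $(n-1,k)$, is already one below its king-distance, and appending a constant row inverts this on that set; hence $H_1(n,k)-H_1(n-1,k)$ equals the number of valid $n\times k$ arrays in which the entry at $(n-1,k)$ \emph{equals} its king-distance. Call these the corner-high arrays. For orientation I would also single out the complementary family in which every row below the leading $k\times k$ block is one below its king-distance: such rows are then forced to be constant, so the array is determined by its top $k\times k$ block, which is exactly the top-left $k\times k$ block of a Hardinian $(k+1)\times(k+1)$ array (whose last row and column are themselves forced constant). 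By Theorem~\ref{thm:diag:r=1} this family has size $\tfrac13(4^{k}-1)=H_1(k+1,k+1)$; at $n=k+1$ it exhausts all valid arrays, so $H_1(k+1,k)=H_1(k+1,k+1)$, which confirms $H_1(n,k)-H_1(n-1,k)=4^{k-1}$ at $n=k+1$.

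It remains to count the corner-high arrays. Conditioning on the entry at $(n-1,k)$ being at its king-distance forces, by monotonicity, a staircase of further entries at their king-distances, and the last row stays forced constant; this rigidity is what supports a lattice-path description. As in Theorem~\ref{thm:diag:r=1}, draw the upper path separating king-distance from sub-king-distance entries row-by-row above the $k\times k$ diagonal, and the lower path doing the same column-by-column but now continuing down through the $n-k$ tall rows below the diagonal. The array is then recovered from this pair of paths together with a run of free diagonal entries, and I expect the run to collapse to nothing here, so that the two paths alone determine the array; since each path carries $2^{k-1}$ binary choices exactly as in Theorem~\ref{thm:diag:r=1}, the corner-high arrays number $2^{k-1}\cdot 2^{k-1}=4^{k-1}$, independently of $n$. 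As a check, $4^{k-1}=H_1(k+1,k+1)-H_1(k,k)$.

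The hardest part is this last count---making the path decomposition rigorous once the tall block is present: pinning down the possible endpoints of the lower path inside it, verifying that the free diagonal entries contribute nothing extra (in contrast to the factor $n-j-1$ in Theorem~\ref{thm:diag:r=1}, which there recorded the distance from a path to the forced corner), and dealing with the edge cases in which a path leaves the array through an edge without ever meeting the diagonal. Once the number of corner-high arrays is shown to equal $4^{k-1}$, the induction on $n$ closes and the formula follows; equivalently, the rectangular double-path summation evaluates to $4^{k-1}(n-k)+\tfrac13(4^{k-1}-1)$.
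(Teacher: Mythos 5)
Your argument is correct and completable, but it organizes the count differently from the paper. The paper keeps the double-path decomposition global: it splits the pairs of paths according to whether the lower path is ever adjacent to the diagonal of the top $k\times k$ block (those pairs reproduce the square case and contribute $H_1(k,k)$), while the remaining lower paths have $n-k$ possible ending positions with $2^{k-1}$ paths from each, every such pair determining a unique array, giving $(n-k)4^{k-1}$. You instead induct on $n$, peeling off the forced constant last row; your bijection between $(n-1)\times k$ arrays and the $n\times k$ arrays whose $(n-1,k)$ entry is one below its king-distance is sound, so everything reduces to showing that the corner-high arrays number $4^{k-1}$. You explicitly defer the hardest part of that count, but it does go through, and for essentially the reason you guess: for $n\ge k+2$ a corner-high lower path has its dividing point in column $j$ at row at least $n-(k-j)\ge j+2$, so it is never adjacent to the diagonal; hence no free diagonal entries arise, each of the $2^{k-1}\cdot 2^{k-1}$ pairs of paths determines one array, and the count is $4^{k-1}$ independently of $n$; the remaining case $n=k+1$ you correctly dispose of via $H_1(k+1,k)=H_1(k+1,k+1)$. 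What your route buys is an explicit justification of the step the paper asserts in one sentence (that each pair of paths avoiding the diagonal yields exactly one array) and a clean reduction to a quantity independent of $n$; what it costs is the extra induction and the separate treatment of $n=k+1$, which the paper's direct classification absorbs automatically. To submit this as a proof you would still need to write out the deferred adjacency estimate above and check that every pair of paths is realizable (no conflict in the diagonal entries they force), both of which are routine.
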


\begin{proof}
    Draw the same paths indicated in the proof of Theorem~\ref{thm:diag:r=1}.
    See Figure~\ref{fig:double-rect} for an example.

    A lower path now is either adjacent to the diagonal at some point or not.
    The number of valid arrays where the lower path is adjacent to the diagonal
    at some point is $H_1(k, k)$. All other pairs of paths contribute only one
    valid array. There are $n - k$ possible ending positions for a lower path
    which is never adjacent to the diagonal and $2^{k - 1}$ paths originating
    from each. Thus this case contributes $(n - k) 4^{k - 1}$ valid arrays.
    Together this yields $H_1(n, k) = 4^{k - 1} (n - k) + H_1(k, k)$.
\end{proof}

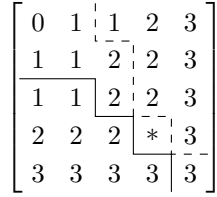
\begin{figure}
   \begin{tikzpicture}[scale=.5]
     \draw(-.1,-2.45)node {$\left[\rule{0pt}{1.4cm}\right.$};
     \draw(5.1,-2.45)node {$\left.\rule{0pt}{1.4cm}\right]$};
     \begin{scope}[yshift=-.5cm,xshift=.5cm]
       \draw(0,0) node {0} (1,0) node {1} (2,0) node {1} (3,0) node {2} (4,0) node {3}; 
       \draw[yshift=-1cm](0,0) node {1} (1,0) node {1} (2,0) node {2} (3,0) node {2} (4,0) node {3};
       \draw[yshift=-2cm](0,0) node {1} (1,0) node {1} (2,0) node {2} (3,0) node {2} (4,0) node {3};
       \draw[yshift=-3cm](0,0) node {2} (1,0) node {2} (2,0) node {2} (3,0) node {$\ast$} (4,0) node {3};
       \draw[yshift=-4cm](0,0) node {3} (1,0) node {3} (2,0) node {3} (3,0) node {3} (4,0) node {3};
     \end{scope}
     \draw[dashed] (2,0)--(2,-1)--(3,-1)--(3,-3)--(4,-3)--(4,-4)--(5,-4);
     \draw(0,-2)--(2,-2)--(2,-3)--(3,-3)--(3,-4)--(4,-4)--(4,-5);
   \end{tikzpicture}
   \caption{A generic $5\times 5$ matrix with two specific paths as constructed
       in the combinatorial proof of Theorem~\ref{thm:diag:r=1}. Every entry is
       determined by the paths except the one labeled $\ast$, which may be $3$
   or $2$.}
   \label{fig:double-path}
\end{figure}

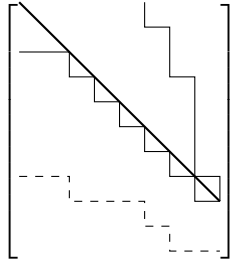
\begin{figure}
    \[
    \begin{bmatrix}
      \begin{tikzpicture}[scale=.33]
        \draw(5,8)--(5,7)--(6,7)--(6,5)--(7,5)--(7,1)--(8,1)--(8,0);
        \draw[thick] (0,8)--(8, 0);
        \draw (0,6)--(2,6)--(2,5)--(3,5)--(3,4)--(4,4)--(4,3)--(5,3)--(5,2)--(6,2)--(6,1)--(7,1)--(7,0)--(8,0);
        \begin{scope}[yshift=4cm]
          \draw [dashed] (0,-3)--(1,-3)--(2,-3)--(2,-4)--(4,-4)--(5,-4)--(5,-5)--(6,-5)--(6,-6)--(8,-6);
        \end{scope}
      \end{tikzpicture}
    \end{bmatrix}
    \]
    \caption{%
      The generic picture for paths in the proof of
      Theorem~\ref{thm:rect:r=1}. The lower two paths are examples of the two
  possible cases.}\label{fig:double-rect}
\end{figure}

As these combinatorial arguments do not seem to extend to $r>1$, we give
some alternative proofs of Theorem~\ref{thm:diag:r=1}. They all rely on the theorem
of Gessel and Viennot \cite[Theorem 10.13.1]{krattenthaler15}, which translates
the counting problem into a determinant evaluation problem. We will evaluate
the determinant in three different ways. The following notation will be used.

\begin{defn}
  \begin{enumerate}
  \item For each positive integer $n$, let $M(n)$ be the $n \times n$ matrix of
    binomial coefficients
    \[
        \left\{ \binom{u + v}{u} \right\}_{0 \leq u, v < n}.
    \]
    Observe that rows and columns are indexed starting from zero.     
  \item
    For any $n\times n$ matrix~$A$, any distinct row indices $i_1, i_2, \dots,i_r\in\{0,\dots,n-1\}$
    and distinct column indices $j_1, j_2, \dots, j_r\in\{0,\dots,n-1\}$, let
    $A_{i_1,i_2,\dots,i_r}^{j_1,j_2,\dots,j_r}$ be the $(n-r)\times(n-r)$ matrix
    obtained from $A$ by deleting rows $i_1,\dots,i_k$ and columns
    $j_1,\dots,j_k$.
  \item For every $n\geq1$, define
    \begin{align*}
        \Delta(n) &= \det M(n) \\
        \Delta(n)_{i_1,i_2,\dots,i_r}^{j_1,j_2,\dots,j_r} &=
            \det M(n)_{i_1,i_2,\dots,i_r}^{j_1,j_2,\dots,j_r}.
    \end{align*}
  \end{enumerate}
\end{defn}

 \begin{lemma} $\Delta(n)=1$ for all~$n$.
 \end{lemma}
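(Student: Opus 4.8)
The plan is to show that $M(n)$, the matrix of binomial coefficients $\binom{u+v}{u}$ for $0\le u,v<n$, factors as a product of a lower-triangular and an upper-triangular matrix, each with $1$'s on the diagonal, so that $\det M(n)=1$ follows immediately. The key is the Vandermonde–Chu identity $\binom{u+v}{u}=\sum_{t}\binom{u}{t}\binom{v}{t}$: reading this as a matrix product says $M(n)=PP^{\mathsf T}$, where $P$ is the Pascal matrix $P=\left\{\binom{u}{t}\right\}_{0\le u,t<n}$. Since $\binom{u}{t}=0$ for $t>u$ and $\binom{u}{u}=1$, the matrix $P$ is lower-triangular with unit diagonal, hence $\det P=1$, and therefore $\det M(n)=\det P\cdot\det P^{\mathsf T}=1$.

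First I would state the Vandermonde–Chu identity $\binom{u+v}{u}=\sum_{t=0}^{\infty}\binom{u}{t}\binom{v}{t}$ and note that the sum is finite because $\binom{u}{t}=0$ once $t>u$; in particular only indices $0\le t<n$ contribute when $u,v<n$. Next I would record this as the matrix equation $M(n)=P\,P^{\mathsf T}$ with $P$ as above. Then I would observe that $P$ is lower-triangular with all diagonal entries equal to $1$, so $\det P=1$; taking determinants of both sides gives $\Delta(n)=\det M(n)=1$.

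An equally short alternative, if one prefers to avoid quoting Vandermonde–Chu, is a direct row-reduction argument: subtracting row $u-1$ from row $u$ in $M(n)$, working from the bottom up, uses the Pascal rule $\binom{u+v}{u}-\binom{u+v-1}{u-1}=\binom{u+v-1}{u}$ and transforms $M(n)$ into a matrix whose first column is $e_0$ and whose lower-right $(n-1)\times(n-1)$ block is again a matrix of the same binomial shape; an induction on $n$ then finishes the proof. I would present the $PP^{\mathsf T}$ factorization as the main argument since it is cleanest, and perhaps remark on the row-reduction version in passing.

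The only genuine point to be careful about is the indexing: rows and columns of $M(n)$ run over $0,\dots,n-1$, so one must check that the factorization $M(n)=PP^{\mathsf T}$ respects this convention and that $P$ is genuinely square of size $n$, which it is because the summation index $t$ in Vandermonde–Chu can be restricted to $0\le t<n$ without loss. There is no real obstacle here; the lemma is a classical fact and the proof is a one-liner once the Pascal-matrix factorization is written down.
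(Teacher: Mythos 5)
Your proposal is correct and is essentially identical to the paper's own proof: the paper also writes $M(n)=AB$ with $A=\{\binom{u}{v}\}$ and $B=A^{\mathsf T}$ via Vandermonde's identity and concludes from the unitriangularity of the factors. No further comment is needed.
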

 \begin{proof}
   Observe that $M(n)=AB$ where $A$ is the matrix whose entry at $(u,v)$ is $\binom uv$
   and $B$ is the matrix whose entry at $(u,v)$ is $\binom vu$.
   This follows from Vandermonde's identity $\binom{u+v}v=\sum_k\binom uk\binom vk$.
   As $A$ and $B$ are triangular matrices with $1$'s on the diagonal,
   the claim follows from $\Delta(n)=\det(M(n))=\det(A)\det(B)$.
 \end{proof}

 The key observation is that the valid $n \times n$ arrays can be partitioned
 into contiguous regions, as shown in Figure~\ref{fig:gesselviennot}.
 \begin{figure}
 \[
   \begin{bmatrix}
     \begin{tikzpicture}[scale=.5,yscale=-1]
     \begin{scope}[xshift=.5cm,yshift=.5cm]
       \draw(0,0)node{0}(1,0)node{1}(2,0)node{1}(3,0)node{2}(4,0)node{3}(5,0)node{4}(6,0)node{5};
       \draw[yshift=1cm](0,0)node{1}(1,0)node{1}(2,0)node{2}(3,0)node{2}(4,0)node{3}(5,0)node{4}(6,0)node{5};
       \draw[yshift=2cm](0,0)node{2}(1,0)node{2}(2,0)node{2}(3,0)node{2}(4,0)node{3}(5,0)node{4}(6,0)node{5};
       \draw[yshift=3cm](0,0)node{2}(1,0)node{2}(2,0)node{3}(3,0)node{3}(4,0)node{3}(5,0)node{4}(6,0)node{5};
       \draw[yshift=4cm](0,0)node{3}(1,0)node{3}(2,0)node{3}(3,0)node{3}(4,0)node{3}(5,0)node{4}(6,0)node{5};
       \draw[yshift=5cm](0,0)node{4}(1,0)node{4}(2,0)node{4}(3,0)node{4}(4,0)node{4}(5,0)node{4}(6,0)node{5};
       \draw[yshift=6cm](0,0)node{5}(1,0)node{5}(2,0)node{5}(3,0)node{5}(4,0)node{5}(5,0)node{5}(6,0)node{5};
     \end{scope}
     \draw(0,1)--(1,1)--(1,0);
     \draw(0,2)--(2,2)--(2,1)--(3,1)--(3,0);
     \draw(0,4)--(2,4)--(2,3)--(4,3)--(4,0);
     \draw(0,5)--(5,5)--(5,0);
     \draw(0,6)--(6,6)--(6,0);     
   \end{tikzpicture}
   \end{bmatrix}
   \]
   \caption{%
     The contiguous regions of a Hardinian array are separated by a tuple of nonintersecting lattice walks
     starting on the left and ending a the top.
   }\label{fig:gesselviennot}
  \end{figure}
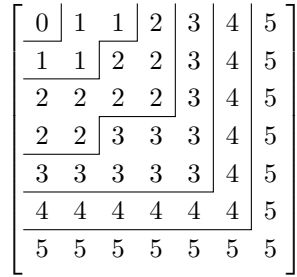%
There is a region for $0$, a region for $1$, a region for $2$, and so on. In
the $n \times n$ case, the region corresponding to $k$ is obtained by beginning
at the lowest occurrence of $k$ in the first column, moving as far right as
possible while only passing $k$'s, and moving up when stuck. For an $n \times
n$ Hardinian array this process always terminates in the first row.

\begin{prop}\label{prop:gesselviennot}
  $\displaystyle H_1(n, n) = \sum_{i = 0}^{n - 2} \sum_{j = 0}^{n - 2} \Delta(n-1)_i^j$ for all $n\geq1$.
\end{prop}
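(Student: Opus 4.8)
The plan is to turn the region decomposition of Figure~\ref{fig:gesselviennot} into a count of nonintersecting lattice paths and then apply the Gessel--Viennot lemma. For each $v$ with $0\le v\le n-3$, let $B_v$ be the frontier separating the cells with entry $\le v$ from the cells with entry $\ge v+1$: it starts where column~$1$ rises from $v$ to $v+1$, runs rightward along cell borders while keeping only $v$'s above it, and turns up when it cannot. The monotonicity rule makes $B_v$ a staircase path built from right- and up-steps that runs from the left edge to the top edge; since every entry of the last row and last column equals $n-2$, it meets neither of those two edges; and by construction it touches the left and top edges only at its two endpoints. Because the regions are nested, $B_0,\dots,B_{n-3}$ are pairwise vertex-disjoint. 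Conversely, from any vertex-disjoint tuple of such paths one reconstructs an array by letting the entry of a cell be the number of frontiers separating it from cell~$(1,1)$; here vertex-disjointness forces edge-disjointness, which gives the ``$+1$'' half of the monotonicity rule, the staircase shape gives the rest, and (together with vertex-disjointness) the endpoint restriction described next forces the ``within~$r$'' rule.

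First I would pin down the endpoints. The column-$1$ entries increase weakly from $0$ in row~$1$ to $n-2$ in row~$n$ in steps of $0$ or $1$, so exactly one of the steps is flat; hence the $n-2$ left endpoints of $B_0,\dots,B_{n-3}$ occupy all but one of the $n-1$ admissible slots on the left edge. Index these slots by $0,\dots,n-2$ and call the missing one $i$; being $n-2$ distinct elements of an $(n-1)$-element set, the left endpoint of $B_v$ is in fact forced into slot $v$ or slot $v+1$. The mirror statement holds on the top edge, with a missing slot $j\in\{0,\dots,n-2\}$. Thus valid $n\times n$ arrays correspond bijectively to triples consisting of an index $i\in\{0,\dots,n-2\}$, an index $j\in\{0,\dots,n-2\}$, and a vertex-disjoint tuple of staircase paths whose left endpoint slots are $\{0,\dots,n-2\}\setminus\{i\}$ and whose top endpoint slots are $\{0,\dots,n-2\}\setminus\{j\}$.

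Next I would carry out the path count and invoke the lemma. With the slots indexed so that slot $a$ on the left edge lies one row deeper than slot $a-1$ (and similarly on top), a staircase path from left slot $a$ to top slot $b$ that stays off both edges except at its ends must begin with a right-step and end with an up-step; dropping those two forced steps leaves an unconstrained staircase path, so there are exactly $\binom{a+b}{a}$ of them---precisely the $(a,b)$ entry of $M(n-1)$. Moreover, a vertex-disjoint tuple can only connect the sources to the sinks in the order-preserving way, since any inversion would force two of the paths to cross (one beginning higher on the left edge but ending further right along the top than the other). Hence the Lindstr\"om--Gessel--Viennot lemma applies with the identity permutation, giving that the number of vertex-disjoint tuples with source slots $\{0,\dots,n-2\}\setminus\{i\}$ and sink slots $\{0,\dots,n-2\}\setminus\{j\}$ equals $\det M(n-1)_i^j=\Delta(n-1)_i^j$. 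Summing over all $i,j\in\{0,\dots,n-2\}$ then yields $H_1(n,n)=\sum_{i=0}^{n-2}\sum_{j=0}^{n-2}\Delta(n-1)_i^j$.

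The main obstacle is the combinatorial dictionary of the first two paragraphs, and above all its ``$\Leftarrow$'' direction: one must verify that an arbitrary vertex-disjoint tuple of staircase paths with endpoints in the prescribed slot sets really reconstructs to an array obeying \emph{all three} Hardinian rules. The monotonicity rule comes for free, but the ``within~$r$'' rule is delicate, and checking it uses both the endpoint restriction (which keeps each frontier near its anti-diagonal) and the vertex-disjointness (which stops the outer frontiers from bulging past the inner ones). Once this dictionary is in place, the path count and the Gessel--Viennot step are routine.
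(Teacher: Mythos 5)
Your proof is correct and follows essentially the same route as the paper: decompose the array into contiguous regions, observe that the $n-2$ separating frontiers form a tuple of nonintersecting staircase paths with one start slot $i$ and one end slot $j$ omitted from $\{0,\dots,n-2\}$, count $\binom{a+b}{a}$ paths per source--sink pair, and apply Gessel--Viennot to get $\Delta(n-1)_i^j$ before summing over $i$ and $j$. The one point you flag as delicate (verifying the ``within $r$'' rule in the reverse direction) is treated equally briefly in the paper, via the same observation that the endpoint slots control the king-distance along the first row and column.
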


\begin{proof}
    The $n - 1$ contiguous regions in a Hardinian array of size $n \times n$
    are separated by $n - 2$ nonintersecting lattice paths. These paths begin
    on one of the $n - 1$ edges between entries in the first column and end on
    one of the $n - 1$ edges between entries in the first row, using only steps
    to the right ($\to$) and upwards ($\uparrow$). Each Hardinian array
    corresponds to exactly one such set of paths.

    In the other direction, each such set of paths corresponds to a Hardinian
    array. Given such a set, assign the induced regions the values $0$, $1$,
    \dots $n - 2$ in order from the top-left to the bottom-right. The top left
    will contain a $0$, the bottom right will contain an $n - 2$, and adjacent
    entries differ by no more than $1$. To see that the king-distance rule is
    not violated, note that it is not violated at the entries before the
    boundaries on the first column and first row---because at most one entry
    does not have a path just before it---and that these points have the
    largest king-distance of any entry reached using the available steps.

    It follows that the number of Hardinian arrays of size $n \times n$ equals
    the number of sets of nonintersecting lattice paths we have described. If
    we label the possible starting and ending positions $0, 1, \dots, n - 2$,
    then there are altogether $\binom{u+v}v$ paths from $u$ to~$v$, for any $u$
    and~$v$.

    Consider the set of paths where $i$ is the unique unchosen startpoint and
    $j$ the unique unchosen endpoint. In this case the $k$th path
    ($k=0,\dots,n-3$) starts at $k+[i\leq k]$ and ends at $k+[j\leq k]$. By the
    theorem of Gessel and Viennot, the number of such sets of paths is the
    determinant of the $(n-2)\times(n-2)$ matrix whose entry at position
    $(u,v)$ is $\binom{u+v+[i\leq u]+[j\leq v]}{v+[j\leq v]}$. This determinant
    equals $\Delta(n-1)_i^j$. It follows that $H_1(n, n)$ is the sum of
    $\Delta(n-1)_i^j$ over all possible rows~$i$ and columns~$j$.
\end{proof}

The proposition reduces the enumeration problem to the problem of evaluating
a sum of determinants. This can be done as follows. 

 \begin{proof}[Second proof of Theorem~\ref{thm:diag:r=1}]
   Let $\tilde M(n)$ be the $(n+1)\times(n+1)$ matrix obtained from $M(n)$ by first attaching an additional row $1,-1,1,-1,\dots$
   at the top and then an additional column $0,-1,1,-1,1,\dots$ at the left, e.g., 
   \[
   \tilde M(5) = \begin{vmatrix}
     0 & 1 & -1 & 1 & -1 & 1 \\
     -1 & 1 & 1 & 1 & 1 & 1 \\
     1 & 1 & 2 & 3 & 4 & 5 \\
     -1 & 1 & 3 & 6 & 10 & 15 \\ 
     1 & 1 & 4 & 10 & 20 & 35 \\
     -1 & 1 & 5 & 15 & 35 & 70
     \end{vmatrix}.
   \]
   By expanding along the first row and then along the first column, we have
   $\det\tilde M(n)=\sum_{i=0}^{n-1}\sum_{j=0}^{n-1}\Delta(n)_i^j$.

   It remains to determine the determinant of $\tilde M(n)$.

   Subtract the $(n-2)$nd row from the $(n-1)$st, then the $(n-3)$rd row from the $(n-2)$nd, and so on,
   and analogously for the columns, e.g.,
   \[
    \begin{gmatrix}[v]
     0 & 1 & -1 & 1 & -1 & 1 \\
     -1 & 1 & 1 & 1 & 1 & 1 \\
     1 & 1 & 2 & 3 & 4 & 5 \\
     -1 & 1 & 3 & 6 & 10 & 15 \\ 
     1 & 1 & 4 & 10 & 20 & 35 \\
     -1 & 1 & 5 & 15 & 35 & 70
     \rowops\add[-1]45\add[-1]34\add[-1]23\add[-1]12
     \colops\add[-1]45\add[-1]34\add[-1]23\add[-1]12
    \end{gmatrix}
    =\begin{vmatrix}
    0 & 1 & -2 & 2 & -2 & 2 \\
    -1 & 1 & 0 & 0 & 0 & 0 \\
    2 & 0 & 1 & 1 & 1 & 1 \\
    -2 & 0 & 1 & 2 & 3 & 4 \\
    2 & 0 & 1 & 3 & 6 & 10 \\
    -2 & 0 & 1 & 4 & 10 & 20     
    \end{vmatrix}
   \]
   In general, the proposed row and column operations replace the entry $\binom uv$ by
   \[
   \binom uv - \binom{u-1}v - (\binom u{v-1} - \binom{u-1}{v-1}) = \binom{u-1}{v-1}.
   \]
   Now expand along the second row (or column) to obtain
   \[
    \det\tilde M(n) = \Delta(n-1) + 4 \det\tilde M(n-1) = 4\det\tilde M(n-1) + 1
   \]
   for every~$n$. Together with the initial value $\det\tilde M(1)=1$, it follows
   by induction that $\det\tilde M(n)=\frac13(4^n-1)$.
   In view of Prop.~\ref{prop:gesselviennot}, Theorem~\ref{thm:diag:r=1} follows by replacing $n$ by $n-1$.
 \end{proof}

 \begin{proof}[Third proof of Theorem~\ref{thm:diag:r=1}]
   This proof uses computer algebra, in the spirit of an approach proposed by Zeilberger~\cite{zeilberger07}. 
   Because of $\Delta(n)=1$ and Cramer's rule, $(-1)^{i+j}\Delta(n)_i^j$ is the entry of $M(n)^{-1}$ at position
   $(i,j)$. For $n\geq1$ and $i,j=0,\dots,n-1$, define
   \[
   c(n,i,j)=(-1)^{i+j}\sum_{\ell=0}^{n-1}\binom i\ell\binom j\ell.
   \]
   Using symbolic summation algorithms (as implemented, e.g., in Koutschan's package~\cite{koutschan10c}),
   it can be easily shown that
   \[
     \sum_{k=0}^{n-1} \binom {i+k}k c(n,k,j) = \delta_{i,j}
   \]
   for all $n\geq1$ and all $i,j\geq0$. Therefore, $c(n,i,j)$ is the entry at $(i,j)$ of $M(n)^{-1}$,
   and thus equal to $(-1)^{i+j}\Delta(n)_i^j$.

   Applying summation algorithms once more, we can prove that the sum $s(n)=\sum_{i,j} (-1)^{i+j}c(n,i,j)$
   satisfies the recurrence 
   \[
   s(n+2)=5s(n+1)-4s(n)
   \]
   for all $n\geq1$. Together with the initial values $s(1)=1$ and $s(2)=5$, the claimed closed form expression
   now follows again by induction.
 \end{proof}

 While the sum $\Delta(n)_i^j=\sum_{\ell=0}^{n-1}\binom i\ell\binom j\ell$ % checked
 does not have a hypergeometric closed form,
 it does simplify in the special case $j=n-1$, where it turns out to be equal to $\binom{n-1}i$. % checked
 Taking the knowledge of this special case for granted, we can give a fourth proof of Theorem~\ref{thm:diag:r=1}.
 
 \begin{proof}[Fourth proof of Theorem~\ref{thm:diag:r=1}]
   Dodgson's identity (cf. Prop.~10 of Krattenthaler's tutorial on evaluating determinants~\cite{krattenthaler99}) says
   that 
   \[
     \det(A)\det(A_{i,n-1}^{j,n-1}) = \det(A_i^j)\det(A_{n-1}^{n-1}) - \det(A_i^{n-1})\det(A_{n-1}^j)
   \]
   for every $n\times n$ matrix~$A$. (Actually, Krattenthaler states the equation for $i=j=0$,
   but it is easily seen that it holds for arbitrary $i$ and $j$, because we can multiply $A$ with
   suitable permutation matrices from the left and the right in order to reduce to the case $i=j=0$.)

   Consider $A=M(n)$ and observe that $A_{n-1}^{n-1}=M(n-1)$. Then, because of $\Delta(n)=\Delta(n-1)=1$ it
   follows that
   \[
     \Delta(n-1)_i^j = \Delta(n)_i^j - \Delta(n)_i^{n-1}\Delta(n)_{n-1}^j.
   \]
   Using $\Delta(n)_i^{n-1}=\binom{n-1}i$ and $\Delta(n)_{n-1}^j=\binom{n-1}j$, it follows that
   \[
    \Delta(n)_i^j = \Delta(n-1)_i^j + \binom{n-1}i\binom{n-1}j.
   \]
   Summing over all $i$ and $j$ gives
   \[
     s(n) = s(n-1) + 4^{n-1},
   \]
   and with $s(1)=1$, the claim follows again by induction.
 \end{proof}
 
 \section{The case $r\geq2$}
 \label{sec:rbig}

 Via the theorem of Gessel and Viennot, we also have access to the sequences $H_r(n,n)$ for $r>1$.
 The argument is the same as for $r=1$, except that now a Hardinian array of size $n\times n$
 consists of $n-r$ contiguous regions, separated by $n-r-1$ nonintersecting lattice paths, whose
 start points and end points are taken from the set $\{0,\dots,n-2\}$. According to Gessel and
 Viennot, $\Delta(n-1)_{i_1,\dots,i_r}^{j_1,\dots,j_r}$ is the number of sets of $n-r-1$
 nonintersecting lattice walks whose start points are $\{0,\dots,n-2\}\setminus\{i_1,\dots,i_r\}$
 and whose end points are $\{0,\dots,n-2\}\setminus\{j_1,\dots,j_r\}$.

 In order to deal with these determinants, it helps to observe that Dodgson's identity quoted in
 the fourth proof of Theorem~\ref{thm:diag:r=1} is a special case of a more general identity due to
 Jacobi~\cite{jacobi33,rice07,abeles14}:
 For an $n\times n$ matrix $A$ and two choices $0\leq i_1<i_2<\dots<i_r<n$ and $0\leq j_1<j_2<\dots<j_r<n$
 of indices, form the $r\times r$ matrix $B$ whose entry at $(u,v)$ is defined as $\det(A_{i_u}^{j_v})$. 
 Then Jacobi's identity says
 \[
   \det(A)^{r-1}\det(A_{i_1,\dots,i_r}^{j_1,\dots,j_r}) = \det(B).
 \]
 For example, for $r=2$ we obtain
 \[
 \det(A)\det(A_{i_1,i_2}^{j_1,j_2}) =
 \begin{vmatrix}
   \det(A_{i_2}^{j_2}) & \det(A_{i_2}^{j_1}) \\
   \det(A_{i_1}^{j_2}) & \det(A_{i_1}^{j_1}) 
 \end{vmatrix}
 = \det(A_{i_1}^{j_1})\det(A_{i_2}^{j_2}) - \det(A_{i_1}^{j_2})\det(A_{i_2}^{j_1}),
 \]
 and setting $i_2=j_2=n-1$ gives Dodgson's version. 

 \begin{thm}\label{thm:diag:r=2}
   For every $r\geq2$, the sequence $H_r(n,n)$ is D-finite.
   In particular, the sequences \oeis{A253217} ($r=2$) and \oeis{A252998} ($r=3$) are D-finite.
 \end{thm}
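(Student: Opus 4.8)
The plan is to convert $H_r(n,n)$ into an explicit finite multiple sum of a proper hypergeometric term and then invoke the closure of D-finite sequences under summation. Almost everything is already assembled. The paragraph preceding the theorem gives
\[
  H_r(n,n)=\sum_{I}\sum_{J}\Delta(n-1)_I^J,
\]
where $I=\{i_1<\dots<i_r\}$ and $J=\{j_1<\dots<j_r\}$ run over all $r$-element subsets of $\{0,\dots,n-2\}$. Jacobi's identity together with $\Delta(n-1)=1$ collapses each of these $(n-1-r)\times(n-1-r)$ determinants to one of fixed size $r\times r$:
\[
  \Delta(n-1)_I^J=\det\bigl(\Delta(n-1)_{i_u}^{j_v}\bigr)_{1\le u,v\le r}.
\]
The point of this step is that the size of the matrices no longer grows with $n$; only $r$ matters.

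Next I would insert the closed form for the single minors. From the third and fourth proofs of Theorem~\ref{thm:diag:r=1} one has $\Delta(n)_i^j=\sum_{m=0}^{n-1}\binom mi\binom mj$, hence $\Delta(n-1)_i^j=\sum_{m=0}^{n-2}\binom mi\binom mj$. Expanding the $r\times r$ determinant along permutations and interchanging summations then gives
\[
  \Delta(n-1)_I^J=\sum_{\pi\in S_r}\operatorname{sgn}(\pi)\sum_{m_1,\dots,m_r=0}^{n-2}\ \prod_{u=1}^r\binom{m_u}{i_u}\binom{m_u}{j_{\pi(u)}},
\]
and therefore
\[
  H_r(n,n)=\sum_{\pi\in S_r}\operatorname{sgn}(\pi)\sum_{0\le i_1<\dots<i_r\le n-2}\ \sum_{0\le j_1<\dots<j_r\le n-2}\ \sum_{m_1,\dots,m_r=0}^{n-2}\ \prod_{u=1}^r\binom{m_u}{i_u}\binom{m_u}{j_{\pi(u)}}.
\]
(Alternatively, by Cauchy--Binet one may keep the $m$'s ordered and factor each minor as a sum over $0\le m_1<\dots<m_r\le n-2$ of the product of the binomial determinants $\det\bigl(\binom{m_w}{i_u}\bigr)$ and $\det\bigl(\binom{m_w}{j_v}\bigr)$; this is not needed for D-finiteness, but it is the shape one would want for the asymptotics promised in the introduction.)

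For each fixed $\pi\in S_r$ the summand $\prod_u\binom{m_u}{i_u}\binom{m_u}{j_{\pi(u)}}$ is a proper hypergeometric term in the $3r$ integer variables $i_1,\dots,i_r,j_1,\dots,j_r,m_1,\dots,m_r$, with $n$ entering only through the summation bounds. The definite sum of such a term over a finite range whose limits depend polynomially on the remaining parameters is again D-finite in those parameters; carrying this out for all $3r$ variables in turn — first over the box $0\le m_u\le n-2$, then over the two simplices, handling the coupled limits one layer at a time as $\sum_{i_1=0}^{i_2-1}\sum_{i_2=0}^{i_3-1}\cdots\sum_{i_r=0}^{n-2}$ and likewise for the $j$'s — shows that each $\pi$-contribution is a D-finite sequence in $n$, and hence so is the finite sum over $\pi\in S_r$. (See \cite{zeilberger07} and the references there for these closure properties and for creative telescoping.) This proves $H_r(n,n)$ is D-finite for every $r\ge2$; in particular the sequences \oeis{A253217} and \oeis{A252998} are D-finite.

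The only genuine obstacle is to make sure the closure machinery really applies to the \emph{simplicial} ranges $i_1<\dots<i_r$ and not merely to rectangular ones: one needs that the partial sum of a D-finite summand over an interval with a variable upper endpoint is again D-finite in that endpoint and in the remaining variables, so that the nested sum can be reduced one variable at a time. This is standard, but it should be spelled out. For a concrete small $r$ — say $r=2$, where one actually wants the order-$5$ recurrence conjectured by Kauers and Koutschan — one would instead feed the explicit multiple sum above to a creative-telescoping implementation such as \cite{koutschan10c}, the remaining difficulty being purely the size of the computation.
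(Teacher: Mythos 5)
Your proof is correct and follows essentially the same route as the paper: Jacobi's identity together with $\Delta(n-1)=1$ collapses each large minor to a fixed-size $r\times r$ determinant of first minors, after which D-finiteness follows from closure properties of holonomic sequences under finite (nested) summation. The paper states the closure step more tersely (the $r\times r$ determinant ``depends polynomially on quantities already recognized as D-finite''), whereas you make it explicit as a proper-hypergeometric multisum --- which is exactly the form the paper itself uses afterwards (the sums $S_1$, $S_2$) to derive the explicit recurrence for $r=2$.
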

 \begin{proof}
   For $A=M(n)$, Jacobi's identity implies
   \[
   \Delta(n)_{i_1,\dots,i_r}^{j_1,\dots,j_r} =
   \begin{vmatrix}
     \Delta(n)_{i_1}^{j_1} & \cdots & \Delta(n)_{i_1}^{j_r} \\
     \vdots & \ddots & \vdots \\
     \Delta(n)_{i_r}^{j_1} & \cdots & \Delta(n)_{i_r}^{j_r}
   \end{vmatrix}
   \]
   For every fixed~$r$, the determinant on the right is D-finite because it depends
   polynomially on quantities which we have recognized in the previous section as
   being D-finite. It follows that the left hand side is D-finite, and consequently, 
   \[
   H_r(n,n) = \sum_{0\leq i_1<\dots<i_r\leq n-2}
              \sum_{0\leq j_1<\dots<j_r\leq n-2} \Delta(n-1)_{i_1,\dots,i_r}^{j_1,\dots,j_r}
   \]
   is D-finite, too.
 \end{proof}

 Theorem~\ref{thm:diag:r=2} is not quite enough to confirm the correctness of the recurrence equation
 Kauers and Koutschan obtained for $H_2(n,n)$ via guessing~\cite{kauers23d}.
 The theorem only implies that the sequence satisfies \emph{some} recurrence.
 In order to explicitly construct a recurrence, we have to evaluate the two 6-fold sums
 \begin{alignat*}1
   S_1(n)
   &=\sum_{i_1\geq0}\sum_{i_2>i_1}\sum_{j_1\geq0}\sum_{j_2>j_1}\sum_{u=0}^n\sum_{v=0}^n\binom u{i_1}\binom u{j_1}\binom v{i_2}\binom v{j_2}\\
   &=\sum_{u=0}^n\sum_{v=0}^n
   \biggl(\underbrace{\sum_{i_1\geq0}\sum_{i_2>i_1}\binom u{i_1}\binom v{i_2}}_{=:s(u,v)}\biggr)
   \biggl(\underbrace{\sum_{j_1\geq0}\sum_{j_2>j_1}\binom u{j_1}\binom v{j_2}}_{=s(u,v)}\biggr)\text{ and}\\
   S_2(n)&=
   \sum_{i_1\geq0}\sum_{i_2>i_1}\sum_{j_1\geq0}\sum_{j_2>j_1}\sum_{u=0}^n\sum_{v=0}^n\binom u{i_1}\binom u{j_2}\binom v{i_2}\binom v{j_1}\\
   &=\sum_{u=0}^n\sum_{v=0}^n
   \biggl(\underbrace{\sum_{i_1\geq0}\sum_{i_2>i_1}\binom u{i_1}\binom v{i_2}}_{=s(u,v)}\biggr)
   \biggl(\underbrace{\sum_{j_1\geq0}\sum_{j_2>j_1}\binom v{j_1}\binom u{j_2}}_{=s(v,u)}\biggr).
 \end{alignat*}
 It seems best to do this using generating functions. We have
 \[
  \sum_{u=0}^\infty\sum_{v=0}^\infty s_1(u,v) x^uy^v=\frac{y}{(1-x-y)(1-2y)}.
 \]
 The generating functions of $s(u,v)^2$ and $s(u,v)s(v,u)$ can be expressed as Hadamard products.
 As explained in~\cite{bostan16b}, Hadamard products can be rephrased as residues, and residues can be
 computed via creative telescoping~\cite{zeilberger90}.
 Using Koutschan's implementation~\cite{koutschan10c}, it is easy to prove
 \begin{alignat*}1
   \frac{y}{(1-x-y)(1-2y)} \odot_{x,y} \frac{y}{(1-x-y)(1-2y)} &= \frac{y}{2 x+2 y-1} \left(\frac{1}{\sqrt{x^2-2 x (y+1)+(y-1)^2}}+\frac{2}{4 y-1}\right)\\
   \frac{y}{(1-x-y)(1-2y)} \odot_{x,y} \frac{x}{(1-x-y)(1-2x)} &= \frac1{2 (2 x+2 y-1)} \left(\frac{x+y-1}{\sqrt{x^2-2 x (y+1)+(y-1)^2}}+1\right),
 \end{alignat*}
 respectively.
 Summing $u$ from $0$ to $n$ and $v$ from $0$ to $m$ amounts to multiplying these series by $\frac1{(1-x)(1-y)}$,
 and setting $m$ to $n$ amounts to taking the diagonals of the resulting bivariate series:
 \begin{alignat*}1
   &\diag \frac1{(1-x)(1-y)} \frac{y}{2 x+2 y-1} \left(\frac{1}{\sqrt{x^2-2 x (y+1)+(y-1)^2}}+\frac{2}{4 y-1}\right), \\
   &\diag \frac1{(1-x)(1-y)} \frac1{2 (2 x+2 y-1)} \left(\frac{x+y-1}{\sqrt{x^2-2 x (y+1)+(y-1)^2}}+1\right),
 \end{alignat*}
 respectively.
 As diagonals can also be rephrased as residues (cf. again~\cite{bostan16b} for a detailed discussion),
 we can apply creative telescoping to obtain linear differential operators annihilating these series.
 Their least common left multiple is an annihilator of the generating function of~$H_2(n,n)$.

 In the end, we obtained a linear differential operator of order~10 with polynomial coefficients of degree~43.
 With this certified operator at hand, we can prove that the guessed recurrence of Kauers and Koutschan is
 correct.

 In principle, we could derive a recurrence for $H_r(n,n)$ for any $r\geq2$ in the same way, but already
 for $r=3$ the computations become too costly.
 We can however use the formula
 \[
   H_r(n,n) = \sum_{0\leq i_1<\dots<i_r\leq n-2} \,
              \sum_{0\leq j_1<\dots<j_r\leq n-2} \Delta(n-1)_{i_1,\dots,i_r}^{j_1,\dots,j_r}
 \]
 to compute some more terms of the sequences. In order to do this efficiently, we can recycle the idea
 of the second proof of Theorem~\ref{thm:diag:r=1} and translate some of the summation signs into additional
 rows and columns of the determinant. For example, for $r=3$ we have
 \[
   H_r(n,n) = \sum_{i=0}^{n-2}\sum_{j=0}^{n-2} |\det(A_{i,j})|
 \]
 where $A_{i,j}$ is the matrix obtained from $M(n-1)$ by removing the $i$th row and the $j$th column and
 adding a row with alternating signs in the column range $0\dots j-1$ followed by zeros and an additional
 row with zeros in the column range $0\dots j-1$ followed by alternating signs; and similarly two additional
 columns. For example, for $n=8, i=4, j=5$ we have
 \begin{center}
   \begin{tikzpicture}[xscale=.65,yscale=.45]
     \draw(-1.3,-3.5)node{$A_{i,j}=\left(\rule{0pt}{1.9cm}\right.$}
          (7.6,-3.5)node{$\left.\rule{0pt}{1.9cm}\right)$};
                  \draw(0,0)node{$0$}(1,0)node{$0$}(2,0)node{$0$}(3,0)node{$0$}(4,0)node{$0$}(5,0)node{$0$}(6,0)node{$1$}(7,0)node{$-1$};
     \draw[yshift=-1cm](0,0)node{$0$}(1,0)node{$0$}(2,0)node{$-1$}(3,0)node{$1$}(4,0)node{$-1$}(5,0)node{$1$}(6,0)node{$0$}(7,0)node{$0$};
     \draw[yshift=-2cm](0,0)node{$0$}(1,0)node{$-1$}(2,0)node{$1$}(3,0)node{$1$}(4,0)node{$1$}(5,0)node{$1$}(6,0)node{$1$}(7,0)node{$1$};
     \draw[yshift=-3cm](0,0)node{$0$}(1,0)node{$1$}(2,0)node{$1$}(3,0)node{$2$}(4,0)node{$3$}(5,0)node{$4$}(6,0)node{$6$}(7,0)node{$7$};
     \draw[yshift=-4cm](0,0)node{$0$}(1,0)node{$-1$}(2,0)node{$1$}(3,0)node{$3$}(4,0)node{$6$}(5,0)node{$10$}(6,0)node{$21$}(7,0)node{$28$};
     \draw[yshift=-5cm](0,0)node{$-1$}(1,0)node{$0$}(2,0)node{$1$}(3,0)node{$5$}(4,0)node{$15$}(5,0)node{$35$}(6,0)node{$126$}(7,0)node{$210$};
     \draw[yshift=-6cm](0,0)node{$1$}(1,0)node{$0$}(2,0)node{$1$}(3,0)node{$6$}(4,0)node{$21$}(5,0)node{$56$}(6,0)node{$252$}(7,0)node{$462$};
     \draw[yshift=-7cm](0,0)node{$-1$}(1,0)node{$0$}(2,0)node{$1$}(3,0)node{$7$}(4,0)node{$28$}(5,0)node{$84$}(6,0)node{$462$}(7,0)node{$924$};
     \draw(1.5,.5)--+(0,-8) (-.5,-1.5)--+(8,0)
     (.5,-7.5)node[below]{\vbox{\clap{\scriptsize extra\mathstrut}\kern-5pt\clap{\scriptsize columns\mathstrut}}}
     (7.7,-.5)node[right]{\vbox{\rlap{\scriptsize extra\mathstrut}\kern-5pt\rlap{\scriptsize rows\mathstrut}}};
     \draw[dotted]
     (-.5,-4.5)--+(8,0) (7.7,-4.5)node[right]{\vbox{\rlap{\scriptsize $i$th row\mathstrut}\kern-5pt\rlap{\scriptsize deleted\mathstrut}}}
     (5.5,.5)--+(0,-8) (5.5,-7.5)node[below]{\vbox{\clap{\scriptsize $j$th column\mathstrut}\kern-5pt\clap{\scriptsize deleted\mathstrut}}};
   \end{tikzpicture}
 \end{center}
 With this optimiziation, it is not difficult to compute the first 100 terms, and using these, the technique
 of~\cite{kauers22b} is able to guess a convincing recurrence equation of order~9 and degree~36. It is not
 reproduced here. 

 For $r=4$, we explicitly delete two rows and columns and add two rows and columns with alternating signs, as shown
 in Figure~\ref{fig:r=4,r=5} on the left.
 \begin{figure}
 \begin{center}
   \begin{tikzpicture}[scale=.5]
     \draw(0,0) rectangle (6,-6);
     \draw(1,0)--(1,-6) (0,-1)--(6,-1) (.5,-6)node[below]{\scriptsize extra\mathstrut} (6,-.5)node[right]{\scriptsize extra\mathstrut};
     \draw[dotted](2.5,0)--(2.5,-6) node[below]{\scriptsize$j_1\mathstrut$} (4,0)--(4,-6) node[below]{\scriptsize$j_2\mathstrut$}
     (0,-2.5)--(6,-2.5)node[right]{\scriptsize$i_1\mathstrut$} (0,-5)--(6,-5)node[right]{\scriptsize$i_2\mathstrut$};
     \draw[very thick, dotted](1,-.75)--(2.5,-.75) (2.5,-.25)--(4,-.25) (.25,-2.5)--(.25,-5) (.75,-1)--(.75,-2.5);
   \end{tikzpicture}\hfil
   \begin{tikzpicture}[scale=.5]
     \draw(-.5,.5) rectangle (6,-6);
     \draw(1,.5)--(1,-6) (-.5,-1)--(6,-1) (.33,-6)node[below]{\scriptsize extra\mathstrut} (6,-.33)node[right]{\scriptsize extra\mathstrut};
     \draw[dotted](2.5,-.5)--(2.5,-6) node[below]{\scriptsize$j_1\mathstrut$} (4,-.5)--(4,-6) node[below]{\scriptsize$j_2\mathstrut$}
     (-.5,-2.5)--(6,-2.5)node[right]{\scriptsize$i_1\mathstrut$} (-.5,-5)--(6,-5)node[right]{\scriptsize$i_2\mathstrut$};
     \draw[very thick, dotted](1,-.75)--(2.5,-.75) (2.5,-.25)--(4,-.25) (4,.25)--(6,.25)
     (.25,-2.5)--(.25,-5) (.75,-1)--(.75,-2.5) (-.25,-5)--(-.25,-6);
   \end{tikzpicture}
 \end{center}
 \caption{%
   Left: the computation of $\sum_{i_1<i_2<i_3<i_4}\sum_{j_1<j_2<j_3<j_4}\Delta(n-1)_{i_1,i_2,i_3,i_4}^{j_1,j_2,j_3,j_4}$ is equivalent
   to the computation of the sum over $i_1,i_2$ and $j_1,j_2$ of the determinants constructed as shown in the figure.
   Light dots indicated omitted rows and columns; strong dots indicate regions filled with alternating signs.
   \hfill\break
   Right: the computation of $\sum_{i_1<i_2<i_3<i_4<i_5}\sum_{j_1<j_2<j_3<j_4<j_5}\Delta(n-1)_{i_1,i_2,i_3,i_4,i_5}^{j_1,j_2,j_3,j_4,j_5}$ is equivalent
   to the computation of the sum over $i_1,i_2$ and $j_1,j_2$ of the determinants constructed as shown in the figure.
 }\label{fig:r=4,r=5}
 \end{figure}
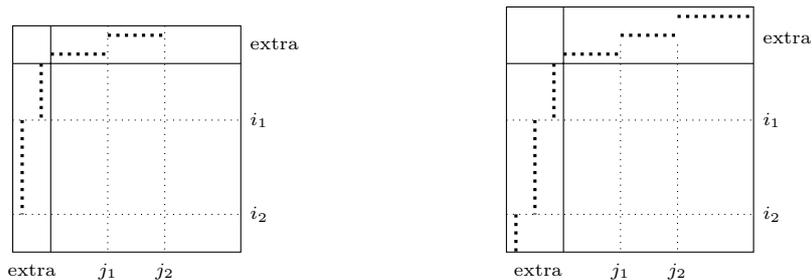%   
 This allows us to reduce the original 8-fold sum to a 4-fold sum.
 A 4-fold sum is also sufficient for $r=5$, where we can even eliminate six summations by adding extra rows and columns,
 as shown in Figure~\ref{fig:r=4,r=5} on the right. 
 By computing the sums over all these determinants, we were able to determine the first $\approx65$ terms of the
 sequences $H_4(n,n)$ and $H_5(n,n)$. Unfortunately, these terms were not sufficient to find a recurrence by guessing.

 However, the terms are enough to obtain convincing conjectured expressions for their asymptotics.
 We obtained the following conjectures:
 \begin{center}
   \begin{tabular}{c|c|l}
     $r$ & asymptotics & remark \\\hline
     0 & 1 & trivial \\
     1 & $\frac1{2^23}\,{4^n}$ & by Theorem~\ref{thm:diag:r=1} \\
     2 & $\frac{1}{2^23^4\pi}\,16^n\,n^{-1}$ & from the proven recurrence \\
     3 & $\frac{1}{2^23^9\pi}\,64^n\,n^{-3}$ & from the guessed recurrence \\
     4 & $\frac{2^2}{3^{16}\pi^2}\,256^n\,n^{-6}$ & from the first 70 terms \\
     5 & $\frac{2^4}{3^{23}\pi^2}\,1024^n\,n^{-10}$ & from the first 70 terms
   \end{tabular}
 \end{center}
 Altogether, it seems that for every $r\geq0$, we have
 \[
 H_r(n,n)\sim c\,2^{2rn} n^{-\binom r2}\qquad(n\to\infty)
 \]
 for some constant $c$ that can be expressed as a power product of $2$, $3$, and~$\pi$.

 At least for specific values of~$r$, it might be possible to prove these conjectured asymptotic formulas
 using the powerful techniques of analytic combinatorics in several variables~\cite{pemantle13,melczer21}.
 However, in order to invoke these techniques, we would need to know more about the bivariate
 sequences~$H_r(n,k)$. Unfortunately, while we found an explicit expression for $H_1(n,k)$, we were not
 able to show that $H_r(n,k)$ is D-finite as a bivariate sequence in $n$ and $k$ for any $r\geq2$, although
 we suspect it to be.  
 
 \bibliographystyle{plain}
 \bibliography{all}
 
\end{document}